\newcommand{\IN}{\mathbb N}
\newcommand{\IR}{\mathbb R}
\newcommand{\IZ}{\mathbb Z}
\newcommand{\IT}{\mathbb T}
\newcommand{\conv}{\mathrm{conv}}
\newcommand{\pr}{\mathrm{pr}}
\newcommand{\e}{\varepsilon}
\newcommand{\w}{\omega}
\newtheorem{theorem}{Theorem}
\newtheorem{claim}{Claim}
\newtheorem{example}{Example}
\title{A simple inductive proof of Lev\'y-Steinitz theorem}
\author{Taras Banakh}
\address{Jan Kochanowski University in Kielce (Poland) and Ivan Franko National University of Lviv (Ukraine)}
\email{t.o.banakh@gmail.com}
\subjclass{40A05, 46B15}
\begin{document}

\begin{abstract} 
We present a relatively simple inductive proof of the classical Lev\'y-Steinitz Theorem saying that for a sequence $(x_n)_{n=1}^\infty$ in a finite-dimensional Banach space $X$ the set  $\Sigma$ of all sums of rearranged series $\sum_{n=1}^\infty x_{\sigma(n)}$ is an affine subspace of $X$ such that $\Sigma=\Sigma+\Gamma^\perp$ where $\Gamma=\{f\in X^*:\sum_{n=1}^\infty|f(x_n)|<\infty\}$ and $\Gamma^\perp=\bigcap_{f\in\Gamma}f^{-1}(0)$. The affine subspace $\Sigma$ is not empty if and only if for any linear functional $f:X\to \IR$ the series $\sum_{n=1}^\infty f(x_{\sigma(n)})$ is convergent for some permutation $\sigma$ of $\mathbb N$. This answers a problem of Vaja Tarieladze, posed in Lviv Scottish Book in September, 2017.

Also we construct a sequence $(x_n)_{n=1}^\infty$ in the torus $\IT\times\IT$ such that the series $\sum_{n=1}^\infty x_{\sigma(n)}$ is divergent for all permutations $\sigma$ of $\IN$ but for any continuous homomorphism $f:\IT^2\to\IT$ to the circle group $\IT:=\IR/\IZ$ the series $\sum_{n=1}^\infty f(x_{\sigma_f(n)})$ is convergent for some permutation $\sigma_f$ of $\mathbb N$. This example 
shows that the second part of Lev\'y-Steinitz Theorem (characterizing sequences with non-empty set of potential sums) does not extend to locally compact Abelian groups.\end{abstract}
\maketitle

\section{Introduction}

In this paper we present a simple inductive proof of the famous Lev\'y-Steinitz theorem on sums of rearranged series in finite-dimensional Banach spaces.

We shall say that a series $\sum_{n=1}^\infty x_n$ in a Banach space $X$ is {\em potentially convergent} to a point $x\in X$ if for some permutation $\sigma$ of $\mathbb N$ the sequence of partial sums $\big(\sum_{n=1}^m x_{\sigma(n)}\big)_{m=1}^\infty$ converges to $x$.
In this case we shall say that $x$ is a {\em potential sum} of the series $\sum_{n=1}^\infty x_n$.

A series $\sum_{n=1}^\infty x_n$ in a Banach space $X$ is called {\em $*$-potentially convergent} if for any linear continuous functional $f\in X^*$ the series $\sum_{n=1}^\infty f(x_n)$ is potentially convergent to some real number. Here by $X^*$ we denote the dual Banach space to $X$.

A subset $A$ of a linear space $X$ is called {\em affine} if $\sum_{i=1}^nt_ia_i\in A$ for any $n\in\IN$, points $a_1,\dots,a_n\in A$ and real numbers $t_1,\dots,t_n$ with $\sum_{i=1}^nt_i=1$. According to this definition, the empty subset of $X$ is affine.

\begin{theorem}[Lev\'y, Steinitz] For a sequence $(x_n)_{n\in\w}$ of points of a finite-dimensional Banach space $X$ the set $\Sigma$ of potential sums of the series $\sum_{n=1}^\infty x_n$ is an affine subspace of $X$ such that $\Sigma=\Sigma+\Gamma^\perp$ where $\Gamma=\{f\in X^*:\sum_{n=1}^\infty|f(x_n)|<\infty\}$ and $\Gamma^\perp=\bigcap_{f\in\Gamma}f^{-1}(0)$. The set $\Sigma$ is not empty if and only if the series $\sum_{n=1}^\infty x_n$ is $*$-potentially convergent.
\end{theorem}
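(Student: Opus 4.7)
My plan is to proceed by induction on $d := \dim X$. The base case $d = 1$ is Riemann's rearrangement theorem: $\Sigma$ is a singleton (and $\Gamma^\perp = \{0\}$) if the series is absolutely convergent, equals $\IR = \Gamma^\perp$ if it is conditionally convergent, and is empty otherwise; the equivalence with $*$-potential convergence (via the identity functional) is immediate.

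For the inductive step, assume the theorem for all dimensions $< d$ and let $X$ have dimension $d \ge 2$. The crucial task is to show that $*$-potential convergence implies $\Sigma \ne \emptyset$. If $\Gamma = X^*$, then in finite dimensions $\sum \|x_n\| < \infty$ and the usual sum lies in $\Sigma$. Otherwise pick $f_0 \in X^* \setminus \Gamma$, set $Y := \ker f_0$, fix $w \in X$ with $f_0(w) = 1$, and define the projection $\pi \colon X \to Y$ by $\pi(x) = x - f_0(x)\,w$. Any $g \in Y^*$ extends to $\tilde g := g \circ \pi \in X^*$ with $g(\pi(x_n)) = \tilde g(x_n)$, so the projected series $\sum \pi(x_n)$ in $Y$ is $*$-potentially convergent. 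By the inductive hypothesis there is a permutation $\sigma$ and $y_0 \in Y$ with $\sum \pi(x_{\sigma(n)}) = y_0$. Since $f_0 \notin \Gamma$ while $\sum f_0(x_n)$ is still potentially convergent, Riemann forces $\sum f_0(x_n)^\pm = \infty$ and $f_0(x_n) \to 0$. The main step is now a Riemann-type block rearrangement of $\sigma$: group $\IN$ into consecutive blocks whose $\pi$-sums have norm at most $2^{-k}$ (possible by the Cauchy property), then reorder inside each block to drive the cumulative $f_0$-partial sums toward a chosen target $r \in \IR$. This yields $\tau$ with $\sum x_{\tau(n)} = y_0 + r\,w \in \Sigma$.

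With $\Sigma \ne \emptyset$ established, the remaining assertions follow by the same projection device. Fixing $s_0 \in \Sigma$ and translating so $s_0 = 0$, absolute convergence of $\sum f(x_n)$ for each $f \in \Gamma$ gives $f(s) = 0$ for every $s \in \Sigma$, so $\Sigma \subseteq \Gamma^\perp$. For the reverse inclusion, take $v \in \Gamma^\perp$: if $\Gamma$ contains a non-zero functional $f$, then $v \in \ker f =: Y$, and the inductive hypothesis in $Y$ (applied with $0 \in \Sigma_\pi$ and $v \in \Gamma_\pi^\perp$) gives $\sigma_1$ with $\sum \pi(x_{\sigma_1(n)}) = v$; since $f \in \Gamma$ makes $\sum f(x_{\sigma_1(n)})$ rearrangement-invariant and equal to $0$, one concludes $\sum x_{\sigma_1(n)} = v$. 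If $\Gamma = \{0\}$ (so $\Gamma^\perp = X$), then $\Sigma_\pi = Y$ by induction and the block-rearrangement construction again promotes $\pi(v) \in \Sigma_\pi$ to $v \in \Sigma$. Thus $\Sigma = s_0 + \Gamma^\perp$ is affine. The main obstacle I foresee is the block-rearrangement step: constructing a single permutation that simultaneously preserves convergence of the $Y$-component (inherited from the inductive step) and forces convergence of the scalar $f_0$-component to the prescribed value. The individual ingredients (Riemann on $\IR$, Cauchy control on $\pi$-sums, the fact that $f_0(x_n) \to 0$) are classical, but coordinating them in a uniform construction is the technical heart of the proof.
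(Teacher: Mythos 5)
Your overall architecture (induction on dimension, split off a direction transverse to a kernel, handle the scalar component by Riemann) is a natural first attempt, but the step you yourself flag as ``the technical heart'' is not just unproved --- the sketch you give for it cannot work as stated. If you group $\IN$ into consecutive $\sigma$-blocks with small $\pi$-sums and then only \emph{reorder inside each block}, the partial sums of $\sum f_0(x_{\tau(n)})$ at the block boundaries are exactly the same as for $\sigma$ (each block contributes its fixed total), so you have no way to steer the scalar component to the target $r$. To change the $f_0$-partial sums you must interleave terms \emph{across} blocks, and at that point the Cauchy control on the $\pi$-component evaporates: you would need a uniform bound on the partial sums of arbitrarily rearranged finite families of vectors (Steinitz's polygonal confinement lemma, or some substitute). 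That bound is precisely the hard content of the theorem, and it appears nowhere in your proposal. A second, smaller gap: your hyperplane $Y=\ker f_0$ is chosen with no reference to the geometry of the sequence, so there is no reason the terms carrying the divergent $f_0$-mass have well-behaved (e.g.\ absolutely summable) $\pi$-components; you silently assume the two coordinates can be manipulated independently, which is exactly what fails in general.

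The paper circumvents both problems by choosing the splitting subspace adaptively. It introduces the set $D\subset S$ of \emph{divergence directions} (unit vectors $u$ such that $\sum_{x_n/\|x_n\|\in U}\|x_n\|$ diverges for every neighborhood $U$ of $u$), shows via Hahn--Banach that $*$-potential convergence forces $0\in\conv(D)$, and takes $X_0$ to be the span of a minimal affinely independent $D_0\subset D$ with $0$ in the interior of $\conv(D_0)$. Because the terms clustered near $D_0$ point in directions surrounding the origin of $X_0$, have divergent norm-sums, and have absolutely summable components orthogonal to $X_0$ (Claim~2), one gets a cheap quantitative steering lemma (Claim~3): any target in $X_0$ can be hit by a finite subsum whose \emph{every} partial subsum is bounded by $C\max\{\|x\|,\e\}$. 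That is the substitute for polygonal confinement, and it is available only because $X_0$ was built from the divergence directions rather than taken to be an arbitrary complement of a hyperplane. The induction then runs on $X_1=X_0^\perp$ (which may drop the dimension by more than one), and the final permutation interleaves the inductively obtained order on $\IN\setminus\Omega$ with blocks from $\Omega$ chosen by Claim~3. If you want to salvage your hyperplane-by-hyperplane scheme, you would have to prove the confinement lemma separately; as written, the proposal reduces the theorem to an unproved claim that is essentially equivalent to it.
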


Lev\'y-Steinitz Theorem was initially proved by Lev\'y \cite{Levy} in 1905. But the proof was very complicated and contained a gap in the higher-dimensional case, which was filled by Steinitz \cite{Steinitz} in 1913. The proof of Steinitz also was very long and it was eventually clarified and simplified by Gross \cite{Gross}, Halperin \cite{Halp}, Rosenthal \cite{Rosen}, Kadets and Kadets \cite[\S2.1]{KK}. But even after simplifications, the proof remained too complicated (8 pages in \cite{Rosen}). This situation motivated Vaja Tarieladze to pose a problem in Lviv Scottish Book \cite{Tar} of finding a simple and transparent proof of  Lev\'y-Steinitz Theorem or at least of its second part (characterizing series with non-empty set of potential sums).
Inspired by this question of Tarieladze, in this paper we present a simple inductive proof of Lev\'y-Steinitz Theorem.

\section{Proof of Lev\'y-Steinitz Theorem}

The proof of the theorem is by induction on the dimension of the Banach space $X$.
For zero-dimensional Banach spaces the theorem is trivial. Assume that  the theorem is true for all Banach spaces of dimension less than some number $d\in\IN$. Let $X$ be a Banach space of dimension $d$, $\sum_{n=1}^\infty x_n$ be a series in $X$, and $\Sigma$ be the set of all potential sums of this series in $X$. If the series $\sum_{n=1}^\infty x_n$ is not $*$-potentially convergent, then it not potentially convergent and the set $\Sigma$ is empty.

So, we assume that the series $\sum_{n=1}^\infty x_n$ is $*$-potentially convergent. In this case we shall prove that $\Sigma$ is a non-empty affine subset of $X$ and $\Sigma=\Sigma+\Gamma^\perp$ where $\Gamma=\{f\in X^*:\sum_{n=1}^\infty|f(x_n)|<\infty\}$.

Without loss of generality, each $x_n$ is not equal to zero.
Also we can assume that the norm of $X$ is generated by a scalar product $\langle\cdot,\cdot\rangle$, so $X$ is a finite-dimensional Hilbert space, whose dual $X^*$ can be identified with $X$. Under such identification, the set $\Gamma$ coincides with the set $\{y\in X:\sum_{n=1}^\infty|\langle y,x_n\rangle|<\infty\}$. Let $S=\{x\in X:\|x\|=1\}$ denote the unit sphere of the Banach space $X$.

\begin{claim}\label{cl1} The sequence $(x_n)_{n=1}^\infty$ tends to zero.
\end{claim}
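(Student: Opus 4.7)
\medskip

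\noindent\textbf{Proof plan for Claim~\ref{cl1}.} The idea is to deduce that $x_n\to 0$ weakly and then invoke finite-dimensionality to upgrade this to norm convergence.

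First I fix an arbitrary $f\in X^*$. By the $*$-potential convergence assumption, there exists a permutation $\sigma$ of $\IN$ (depending on $f$) such that the series $\sum_{n=1}^\infty f(x_{\sigma(n)})$ converges. Since the general term of any convergent series of real numbers tends to zero, we get $f(x_{\sigma(n)})\to 0$ as $n\to\infty$.

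Second, I observe that if a sequence of real numbers tends to zero along some permutation of the indices, it already tends to zero in the original order. Indeed, given $\e>0$, pick $N$ with $|f(x_{\sigma(n)})|<\e$ for all $n>N$. The set $\{\sigma(n):n>N\}$ is the complement in $\IN$ of the finite set $\{\sigma(1),\dots,\sigma(N)\}$, hence cofinite; so there exists $M$ such that every integer $k>M$ belongs to $\{\sigma(n):n>N\}$, which yields $|f(x_k)|<\e$. Therefore $f(x_n)\to 0$.

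Finally, since $f\in X^*$ was arbitrary, the sequence $(x_n)$ converges weakly to zero. In the finite-dimensional Hilbert space $X$, weak convergence coincides with norm convergence (expand each $x_n$ in a fixed orthonormal basis $e_1,\dots,e_d$ and apply the previous step to the coordinate functionals $f_i=\langle\cdot,e_i\rangle$ to see that each coordinate of $x_n$ tends to $0$). Hence $\|x_n\|\to 0$, as claimed. I do not anticipate any real obstacle here; the only subtle point is the elementary fact that convergence to zero along a permutation implies convergence to zero in the natural order, which rests only on cofiniteness of the image of the tail of a permutation.
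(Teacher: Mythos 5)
Your proof is correct, and it takes a somewhat different route from the paper's. Both arguments share the same core observation: for a convergent series $\sum_{n=1}^\infty f(x_{\sigma(n)})$ the terms tend to zero, and the property ``$f(x_n)\to 0$'' is invariant under permutation of indices (your cofiniteness argument makes this explicit; the paper uses it implicitly in contrapositive form, saying that a series whose terms do not tend to zero cannot converge under any rearrangement). The difference lies in how one passes from the scalar statement to the vector one. You argue directly: apply the above to the $d$ coordinate functionals of an orthonormal basis to get componentwise, hence norm, convergence to zero. The paper instead argues by contradiction: assuming $x_n\not\to 0$, it extracts an infinite set $E$ with $\|x_n\|\ge\e$, uses compactness of the unit sphere to find an accumulation point $x_\infty$ of $(x_n/\|x_n\|)_{n\in E}$, and then exhibits the single functional $f=\langle\cdot,x_\infty\rangle$ for which $f(x_n)\not\to 0$, contradicting $*$-potential convergence. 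Your version is arguably more elementary (no compactness, no contradiction, just a basis), while the paper's accumulation-point technique foreshadows the divergence-direction machinery used in the rest of the proof. Either argument is complete and correct.
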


\begin{proof} Assuming that $(x_n)_{n=1}^\infty$ does not tend to zero, we can find an $\varepsilon>0$ such that the set $E=\{n\in\IN:\|x_n\|\ge\e\}$ is infinite. By the compactness of the unit sphere $S=\{x\in X:\|x\|=1\}$ the sequence $\{x_n/\|x_n\|\}_{n\in E}\subset S$ has an accumulating point $x_\infty\in S$. Then for the linear functional $f:X\to\IR$, $f:x\mapsto\langle x,x_\infty\rangle$, the series $\sum_{n=1}^\infty f(x_n)$ is not potentially convergent as its terms do no tend to zero.
But this contradicts our assumption (that $\sum_{n=1}^\infty x_n$ is $*$-potentially convergent).
\end{proof}

 A point $x$ of the sphere $S$ is defined to be a {\em divergence direction} of the series $\sum_{n=1}^\infty x_n$ if for every neighborhood $U\subset S$ of $x$ the set $$\IN_U:=\{n\in\mathbb N:\frac{x_n}{\|x_n\|}\in U\}$$ is infinite and the series $\sum_{n\in\IN_U}\|x_n\|$ is divergent.

Let $D\subset S$ be the (closed) set of all divergence directions of the series $\sum_{n=1}^\infty x_n$. If $D$ is empty, then the (compact) sphere $S$ admits a finite cover $\mathcal U$ by open subsets $U\subset S$ for which the series $\sum_{n\in\IN_U}\|x_n\|$ is convergent. Then $\sum_{n=1}^\infty\|x_n\|\le\sum_{U\in\mathcal U}\sum_{n\in\IN_U}\|x_n\|<\infty$, so the series $\sum_{n=1}^\infty x_n$ is absolutely convergent, $\Gamma=X$ and the set $\Sigma$ is a singleton, equal to $\Sigma=\Sigma+\{0\}=\Sigma+\Gamma^\perp$.

So, assume that the (compact) set $D$ is not empty. We claim that the convex hull of $D$ contains zero. In the opposite case we can apply the Hahn-Banach Theorem and find a linear functional $f:X\to \IR$ such that $f(D)\subset(0,+\infty)$ hence the series $\sum_{n=1}^\infty f(x_n)$ fails to converge potentially, which contradicts our assumption.

This contradiction shows that zero belongs to the convex hull of the set $D$. Let $D_0$ be a subset of smallest (finite) cardinality containing zero in its convex hull $\conv(D_0)$ and let $X_0$ be the linear hull of $D_0$ in $X$. The minimality of $D_0$ ensures that the set $D_0$ is affinely independent and zero is contained in the interior of the convex hull $\conv(D_0)$ in $X_0$. Let $X_1:=\bigcap_{y\in X_0}\{x\in X:\langle x,y\rangle=0\}$ be the orthogonal complement of $X_0$ in $X$, and let $\pr_0:X\to X_0$ and $\pr_1:X\to X_1$ be the orthogonal projections.

\begin{claim}\label{cl2} For every $z\in D_0$ there exists a subset $\Omega_z\subset\IN$ such that
\begin{enumerate}
\item $\sum_{n\in\Omega_z}\|\pr_1(x_n)\|<\infty$;
\item for each neighborhood $U\subset S$ of $z$ the series $\sum_{n\in\Omega_z\cap\IN_U}\|x_n\|$ is divergent.
\end{enumerate}
\end{claim}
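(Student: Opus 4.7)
The idea is to exploit that $z\in D_0\subset X_0$ means $\pr_1(z)=0$, so vectors whose direction is close to $z$ have $\pr_1$-components that are small \emph{relative} to their norm. Combined with the divergence direction property of $z$, this should allow us to harvest enough mass from $\IN_U$ without accumulating $\pr_1$-mass.

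More precisely, I would first fix a decreasing basis $U_1\supset U_2\supset\cdots$ of open neighborhoods of $z$ in the sphere $S$ with $\bigcap_j U_j=\{z\}$, and set
\[
\delta_j:=\sup\{\|\pr_1(w)\|:w\in U_j\}.
\]
Since $z\in X_0$, we have $\pr_1(z)=0$, and by continuity of $\pr_1$ together with the compactness argument on the shrinking neighborhoods, $\delta_j\to 0$. The key pointwise estimate is that for every $n\in\IN_{U_j}$, writing $x_n=\|x_n\|\cdot(x_n/\|x_n\|)$ with $x_n/\|x_n\|\in U_j$, one obtains $\|\pr_1(x_n)\|\le\delta_j\|x_n\|$.

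Next I would pass to a subsequence $j_1<j_2<\cdots$ such that $\delta_{j_k}\le 2^{-k}$. Using Claim~\ref{cl1} (which ensures $\|x_n\|\to 0$), I may also arrange, by discarding an initial segment, that $\|x_n\|\le 1/2$ for all $n$ lying in $\IN_{U_{j_k}}$ that will ever be used. Now I would inductively construct pairwise disjoint finite sets $F_k\subset\IN_{U_{j_k}}$ (avoiding $F_1\cup\cdots\cup F_{k-1}$) with
\[
\tfrac12\le\sum_{n\in F_k}\|x_n\|\le 1.
\]
This selection is possible because $z$ being a divergence direction forces $\sum_{n\in\IN_{U_{j_k}}}\|x_n\|=\infty$ (even after removing the finite set $F_1\cup\cdots\cup F_{k-1}$), and since the individual terms $\|x_n\|$ are at most $1/2$, the greedy partial sums of any enumeration cross the interval $[1/2,1]$ at some step.

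Finally I set $\Omega_z:=\bigcup_{k=1}^\infty F_k$. Condition (1) follows from the chain
\[
\sum_{n\in\Omega_z}\|\pr_1(x_n)\|=\sum_{k=1}^\infty\sum_{n\in F_k}\|\pr_1(x_n)\|\le\sum_{k=1}^\infty\delta_{j_k}\sum_{n\in F_k}\|x_n\|\le\sum_{k=1}^\infty 2^{-k}<\infty.
\]
For (2), given any neighborhood $U\subset S$ of $z$, there exists $k_0$ with $U_{j_k}\subset U$ for all $k\ge k_0$, hence $F_k\subset\IN_U$ for all $k\ge k_0$, and
\[
\sum_{n\in\Omega_z\cap\IN_U}\|x_n\|\ge\sum_{k\ge k_0}\sum_{n\in F_k}\|x_n\|\ge\sum_{k\ge k_0}\tfrac12=\infty.
\]
The only subtle point is calibrating $\delta_{j_k}\le 2^{-k}$ against the requirement $\sum_{F_k}\|x_n\|\ge\tfrac12$ so that the $\pr_1$-mass is geometrically controlled while the total $\|x_n\|$-mass along any neighborhood of $z$ still blows up; everything else is bookkeeping.
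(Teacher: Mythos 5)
Your proposal is correct and follows essentially the same route as the paper: shrink neighborhoods of $z$ so that the relative $\pr_1$-component of directions in the $k$-th neighborhood is at most $2^{-k}$ (the paper uses $O_k=\{x\in S:\|x-z\|<2^{-k}\}$ and the $1$-Lipschitz bound $\|\pr_1(x)\|\le\|x-z\|$), harvest from each a finite block $F_k$ whose total norm is bounded below and above (the paper takes it between $k$ and $k+1$, you take it between $\tfrac12$ and $1$), and take $\Omega_z=\bigcup_k F_k$. The only differences are bookkeeping choices (disjointness of the $F_k$, the calibration of block sizes against the geometric decay), and your estimates close in the same way.
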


\begin{proof} For every $k\in\w$ consider the neighborhood $O_k=\{x\in S:\|x-z\|<\frac1{2^k}\}$ of $z$ in $S$ and observe that each point $x\in O_k$ has $$\|\pr_1(x)\|=\|\pr_1(x-z)+\pr_1(z)\|=
\|\pr_1(x-z)+0\|\le\|x-z\|<\frac1{2^k}.$$
Since $z\in D_0\subset D$, the set $\IN_{O_k}=\{n\in\IN:\frac{x_n}{\|x_n\|}\in O_k\}$ is infinite and the series $\sum_{n\in\IN_{O_k}}\|x_n\|$ is divergent. Using Claim~\ref{cl1}, we can find a finite subset $F_k\subset\IN_{O_k}$ such that $$k<\sum_{n\in F_k}\|x_n\|<k+1.$$

We claim that the set $\Omega_z=\bigcup_{k\in\w}F_k$ has the required properties. Indeed,
$$
\begin{aligned}
&\sum_{n\in\Omega_z}\|\pr_1(x_n)\|\le\sum_{k=0}^\infty\sum_{n\in F_k}\|\pr_1(x_n)\|=\sum_{k=0}^\infty\sum_{n\in F_k}
\big\|\pr_1(\tfrac{x_n}{\|x_n\|})\big\|\cdot \|x_n\|<\\
&<\sum_{k=0}^\infty\sum_{n\in F_k}\frac1{2^k}\|x_n\|=\sum_{k=0}^\infty\frac1{2^k}\sum_{n\in F_k}\|x_k\|<\sum_{k=0}^\infty\frac{k+1}{2^k}<\infty.
\end{aligned}
$$
So, the first condition is satisfied. To check the second condition, take any neighborhood $U\subset S$ of $z$ and find $k\in\w$ such that $O_k\subset U$. The latter inclusion implies $F_m\subset \IN_{O_n}\subset\IN_U$ for all $m\ge k$ and hence $$
\sum_{n\in\IN_U}\|x_n\|\ge \sup_{m\ge k}\sum_{n\in F_m}\|x_n\|\ge\sup_{m\ge k}m=\infty,$$
so the series $\sum_{n\in\IN_U}\|x_n\|$ is divergent.
\end{proof}

Claim~\ref{cl2} implies that the set $\Omega=\bigcup_{z\in D_0}\Omega_z$ has the following properties:
\begin{enumerate}
\item[$(\Omega1)$] $\sum_{n\in\Omega}\|\pr_1(x_n)\|<\infty$;
\item[$(\Omega2)$] for each $x\in D_0$ and each neighborhood $U\subset S$ of $x$ the series $\sum_{n\in\Omega\cap\IN_U}\|x_n\|$ is divergent.
\end{enumerate}

\begin{claim}\label{cl3} There exists a positive constant $C$ such that for every $x\in X_0$, $\e>0$ and finite set $F\subset\Omega$, there exists a finite set $E\subset \Omega\setminus F$ such that
\begin{enumerate}
\item $\|x-\sum_{n\in E}x_n\|<\e$ and
\item $\big\|\sum_{n\in E'}x_n\big\|\le C\cdot\max\{\|x\|,\e\}$ for any subset $E'\subset E$.
\end{enumerate}
\end{claim}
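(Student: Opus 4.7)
The plan is to write $x\in X_0$ as a non-negative combination of the directions in $D_0$ and then realise each piece $\alpha_z z$ by aggregating terms $x_n$ with $n\in\Omega$ and $x_n/\|x_n\|$ close to $z$. Since $D_0$ is affinely independent with $0$ in the interior of $\conv(D_0)$ inside $X_0$, there is $r>0$ such that the $r$-ball of $X_0$ lies in $\conv(D_0)$. Radial scaling then yields, for every $x\in X_0$, a decomposition $x=\sum_{z\in D_0}\alpha_z z$ with $\alpha_z\ge 0$ and $\sum_z\alpha_z\le\|x\|/r$. I will take $C:=1+1/r$; this depends only on $D_0$ and is hence universal.

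Given $x,\e,F$, if $\|x\|<\e$ take $E=\emptyset$ and both conclusions hold trivially. Otherwise $\|x\|\ge\e$, and I pick $\delta>0$ small enough that $|D_0|\delta\le\e$ and $\delta\|x\|/r+|D_0|(\delta+\delta^2)<\e$. Enumerate $D_0=\{z_1,\dots,z_m\}$ and set $U_i:=\{s\in S:\|s-z_i\|<\delta\}$. Recursively in $i=1,\dots,m$, select a finite set $\Omega_{z_i}'\subset(\Omega\cap\IN_{U_i})\setminus(F\cup\bigcup_{j<i}\Omega_{z_j}')$ with $\alpha_{z_i}\le\sum_{n\in\Omega_{z_i}'}\|x_n\|<\alpha_{z_i}+\delta$. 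The existence of such $\Omega_{z_i}'$ follows from property $(\Omega2)$ (the relevant series stays divergent after removing finitely many indices) together with Claim~\ref{cl1} (which makes individual $\|x_n\|$ eventually smaller than $\delta$, so a greedy selection lands within $\delta$ of the target).

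Setting $E:=\bigcup_i\Omega_{z_i}'$, the bound $\|x_n/\|x_n\|-z\|<\delta$ on $\IN_{U_i}$ gives $\|\alpha_z z-\sum_{n\in\Omega_z'}x_n\|\le\delta\sum_{n\in\Omega_z'}\|x_n\|+\delta$; summing over $z$ and inserting the estimates on $\sum\|x_n\|$ yields $\|x-\sum_{n\in E}x_n\|\le\delta\|x\|/r+|D_0|(\delta+\delta^2)<\e$, which is the first condition. For the second, the triangle inequality alone suffices: for every $E'\subset E$,
\[\|\sum_{n\in E'}x_n\|\le\sum_{n\in E}\|x_n\|\le\sum_z(\alpha_z+\delta)\le\|x\|/r+|D_0|\delta\le\|x\|/r+\e\le(1+1/r)\|x\|=C\max\{\|x\|,\e\}.\]
The one delicate point is the simultaneous choice of $\delta$: it must be small enough (depending on $x$ and $\e$) to make the approximation error below $\e$, yet appear only additively in the sub-sum bound so it can be absorbed into $\e$, leaving the upfront constant $C=1+1/r$ universal.
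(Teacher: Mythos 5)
Your argument is correct, and while it shares the paper's overall skeleton --- write $x$ as a nonnegative combination $\sum_{z\in D_0}\alpha_z z$ using that $0$ lies in the interior of $\conv(D_0)$ in $X_0$, then harvest terms of $\Omega$ whose directions lie in small caps around each $z\in D_0$ via property $(\Omega2)$ and Claim~\ref{cl1} --- it handles the crucial second condition by a genuinely different and more elementary mechanism. The paper runs its greedy selection on the \emph{norm of the vector sum} $\big\|\sum_{n\in E_z}x_n\big\|$, and for this it needs the cone inequality $\|u+v\|\ge\|u\|+\frac12\|v\|$ for $u,v$ in a narrow cone around $z$ (proved with the scalar product): this inequality both guarantees that the greedy process can be stopped inside the prescribed window $(ct_z-\tfrac{\e}{2|D_0|},ct_z]$ and forces every sub-sum $\sum_{n\in E'_z}x_n$ to stay in the cone with norm at most $ct_z$, after which the compactness constant $C_\delta$ bounds sums taken over the several cones. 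You instead run the greedy selection on the \emph{scalar series} $\sum_{n\in\Omega'_z}\|x_n\|$, a divergent positive series with terms tending to zero, so landing in the window $[\alpha_z,\alpha_z+\delta)$ is immediate; the bound on all sub-sums then follows from the plain triangle inequality $\|\sum_{n\in E'}x_n\|\le\sum_{n\in E}\|x_n\|\le\sum_z\alpha_z+|D_0|\delta\le \|x\|/r+\e$. This buys a proof that needs no inner-product geometry (it works verbatim for an arbitrary norm on $X$) and the cleaner constant $C=1+1/r$; the price is only the bookkeeping you already flag, namely that $\delta$ may depend on $x$ and $\e$ while $C$ may not, which your additive absorption of $|D_0|\delta\le\e\le\|x\|$ handles correctly. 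Your dispatch of the case $\|x\|<\e$ by $E=\emptyset$ is legitimate for the statement as written (and also suffices for its later use in the inductive construction).
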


\begin{proof} Since the interior of the convex hull of the set $D_0$  contains zero, it also contains some closed ball $\{x\in X_0:\|x\|\le\delta\}$ of positive radius $\delta<\frac14$. Replacing $\delta$ by a smaller number, we can assume that $\|x-y\|>2\delta$ for any distinct points $x,y\in D_0$. We claim that the constant $C=\frac{C_\delta}{\delta}$ where $$C_\delta=\sup\big\{\big\|\sum_{z\in D_0}t_zx_z\big\|:\forall z\in D_0\;(t_z\in[0,1],\;\|x_z-z\|<\delta)\big\}$$
 satisfies our requirements.

Indeed, fix any  $x\in X$, $\e>0$, and a finite set $F\subset\Omega$.
Replacing $\e$ by a smaller number, we can assume that $\e<\delta<\frac14$.
 Let $$c:=\frac1\delta\cdot\max\{\|x\|,\e\}.$$

It follows that $\|\frac{x}c\|\le\delta$ and hence the point $\frac{x}c$ belongs to the interior of the convex hull of the affinely independent set $D_0$ in $X_0$. So,  $\frac{x}c=\sum_{z\in D_0}t_zz$ and hence  $x=\sum_{z\in D_0}ct_zz$ for some sequence $(t_z)_{z\in D_0}$ of positive real numbers with $\sum_{z\in D_0}t_z=1$.

For every $z\in D_0$ consider the spherical disk $S_z=\big\{x\in S:\|x-z\|<\min\{\delta,\frac\e{2c}\}\big\}$ and its convex cone $\hat S_z=\{t\cdot s:t>0,\;s\in S_z\}$. Since $\delta<\frac12\|z-z'\|$ for any distinct points $z,z'\in D_0$, the cones $\hat S_z$, $z\in D_0$, are pairwise disjoint. Observe also that for any elements $x,y\in S_z$, we have the lower bound $$\langle x,y\rangle=\langle x,x\rangle+\langle x,y-x\rangle\ge 1-\|x-y\|>1-2\delta\ge \frac12.$$ Then for any elements $x,y\in\hat S_z$ we have the inequality
\begin{equation}\label{sum}
\|x+y\|\ge \langle x+y,\tfrac{x}{\|x\|}\rangle=\|x\|+{\|y\|}\langle
\tfrac{y}{\|y\|},\tfrac{x}{\|x\|}\rangle \ge\|x\|+\frac12\|y\|.
\end{equation}

 By the property $(\Omega2)$ of the set $\Omega$, for every $z\in D_0$ the set $\Omega_z:=\{n\in\Omega:\frac{x_n}{\|x_n\|}\in S_z\}=\{n\in\Omega:x_n\in\hat S_z\}$ is infinite and the series $\sum_{n\in\Omega_z}\|x_n\|$ is divergent. Since the sequence $(x_n)_{n\in\Omega_z}$ is contained in the cone $\hat S_z$, tends to zero, and the series $\sum_{n\in\Omega_z}\|x_n\|$ is divergent, we can use the inequality (\ref{sum}) and choose a finite subset $E_z\subset \Omega_z\setminus F$ such that the sum $s_z=\sum_{n\in E_z}x_n\in \hat S_z$ has norm in the interval $$c\cdot t_z-\frac{\e}{2|D_0|}<\|s_z\|\le c\cdot t_z$$
where $|D_0|$ denotes the cardinality of the finite set $D_0$.

Then
$$
\|s_z-ct_zz\|\le
\Big\|\|s_z\|\cdot\frac{s_z}{\|s_z\|}-ct_z\frac{s_z}{\|s_z\|}\Big\|+
\Big\|ct_z\frac{s_z}{\|s_z\|}-ct_zz\Big\|=
\Big|\|s_z\|-ct_z\Big|+ct_z\Big\|\frac{s_z}{\|s_z\|}-z\Big\|<\frac{\e}{2|D_0|}+ct_z\frac{\e}{2c}
$$
and for the finite set $E=\bigcup_{z\in D_0}E_z\subset\Omega\setminus F$ we get
$$
\begin{aligned}
\Big\|\sum_{n\in E}x_n-x\Big\|&=\Big\|\sum_{n\in E}x_n-\sum_{z\in D_0}ct_zz\Big\|
\le \sum_{z\in D_0}\Big\|\sum_{n\in E_z}x_n-ct_zz\Big\|=\sum_{z\in D_0}\|s_z-ct_zz\|<\\
&<\sum_{z\in D_0}\big(\frac{\e}{2|D_0|}+t_z\frac{\e}2\big)=\frac\e2(1+\sum_{z\in D_0}{t_z})=\e.
\end{aligned}
$$
Now we prove that the set $E$ satisfies the second condition of Claim~\ref{cl3}. Take any subset $E'\subset E$ and for every $z\in D_0$ let $E'_z=E\cap E_z$. Then $\sum_{n\in E'_z}x_n\in \hat S_z$ and $\|\sum_{n\in E_z'}x_n\|\le\|\sum_{n\in E_z}x_n\|\le ct_z\le c$, which implies that $\sum_{n\in E'_z}x_n\in[0,c]\cdot S_z$ and thus
$$\|\sum_{n\in E'}x_n\|\le c C_\delta=C_\delta\cdot \frac{\max\{\e,\|x\|\}}{\delta}=C\cdot\max\{\|x\|,\e\}$$ by definition of the numbers $C_\delta$ and $C=C_\delta/\delta$.
\end{proof}

\begin{claim}\label{cl4} The sets $\Gamma=\{y\in X:\sum_{n=1}|\langle y,x_n\rangle|<\infty\}$ and $\Gamma_1=\{y\in X_1:\sum_{n=1}|\langle y,\pr_1(x_n)\rangle|<\infty\}$ coincide.
\end{claim}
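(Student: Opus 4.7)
The plan is to establish the two inclusions $\Gamma_1\subseteq\Gamma$ and $\Gamma\subseteq\Gamma_1$ separately. The starting observation is that for any $y\in X_1$ and any index $n$, the vector $\pr_0(x_n)$ lies in $X_0$, which is orthogonal to $y$, so $\langle y,x_n\rangle=\langle y,\pr_1(x_n)\rangle$. Hence the series $\sum_n|\langle y,x_n\rangle|$ and $\sum_n|\langle y,\pr_1(x_n)\rangle|$ coincide term by term for $y\in X_1$, giving $\Gamma_1=\Gamma\cap X_1$ at once; in particular $\Gamma_1\subseteq\Gamma$.

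The substantive direction is therefore $\Gamma\subseteq X_1$. Since $D_0$ spans $X_0=X_1^\perp$, it suffices to show that every $y\in\Gamma$ is orthogonal to every $z\in D_0$. I would argue by contradiction: assuming $\alpha:=\langle y,z\rangle\neq 0$ for some $z\in D_0$, continuity of the functional $\langle y,\cdot\rangle$ on the sphere $S$ yields a neighborhood $U\subset S$ of $z$ on which $|\langle y,x\rangle|>|\alpha|/2$ for every $x\in U$. Because $z\in D_0\subset D$ is a divergence direction, the series $\sum_{n\in\IN_U}\|x_n\|$ diverges by definition. Writing $|\langle y,x_n\rangle|=\|x_n\|\cdot|\langle y,x_n/\|x_n\|\rangle|$ for $n\in\IN_U$ and using the lower bound $|\alpha|/2$ on the unit-vector factor, I obtain $\sum_{n=1}^\infty|\langle y,x_n\rangle|\ge\tfrac{|\alpha|}{2}\sum_{n\in\IN_U}\|x_n\|=\infty$, contradicting $y\in\Gamma$. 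Combining $\Gamma\subseteq X_1$ with $\Gamma\cap X_1=\Gamma_1$ then yields $\Gamma=\Gamma_1$.

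The argument is genuinely short and I do not foresee any real obstacle: it uses nothing beyond the definition of a divergence direction and continuity of a linear functional on the compact sphere $S$. The only thing worth flagging is a mild notational point, namely that the claim identifies a set $\Gamma\subseteq X$ with a set $\Gamma_1\subseteq X_1$; so the nontrivial content is exactly the forced orthogonality of every $y\in\Gamma$ to the span of the divergence directions $D_0$, which is where the hypothesis $0\in\conv(D_0)$ is implicitly used (via the fact that $X_0=\mathrm{span}(D_0)$ is the subspace to be annihilated).
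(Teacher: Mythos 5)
Your proof is correct and follows essentially the same route as the paper: the identity $\langle y,x_n\rangle=\langle y,\pr_1(x_n)\rangle$ for $y\in X_1$ gives $\Gamma_1=\Gamma\cap X_1$, and the inclusion $\Gamma\subseteq X_1$ is obtained by exactly the same contradiction argument, using the neighborhood $U=\{x\in S:|\langle y,x\rangle|>\tfrac12|\langle y,z\rangle|\}$ of a divergence direction $z\in D_0$. No substantive differences.
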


\begin{proof} First we show that $\Gamma\subset X_1=X_0^\perp$. In the opposite case we would find points $y\in\Gamma$ and $z\in D_0$ such that $\langle y,z\rangle\ne 0$. Then $U=\{x\in S:|\langle y,x\rangle|>\frac12|\langle y,z\rangle|\}$ is an open neighborhood of $z$ in the sphere $S$ and by the definition of the set $D\ni z$ the set $\IN_U=\{n\in\IN:\frac{x_n}{\|x_n\|}\in U\}$ is infinite and the series $\sum_{n\in\IN_U}\|x_n\|$ diverges. It follows that for every $n\in\IN_U$ we have $|\langle y,x_n\rangle|>\frac{\|x_n\|}2|\langle y,z\rangle|$, which implies the divergence of the series $\sum_{n\in \IN_U}|\langle y,x_n\rangle|$ but this contradicts the choice of $y\in\Gamma$. This contradiction shows that $\Gamma\subset X_1$.

Next, observe that for any $y\in X_1$ and $n\in\IN$ we get
$$\langle y,x\rangle=\langle y,x-\pr_1(x)\rangle+\langle y,\pr_1(x)\rangle=\langle y,\pr_0(x)\rangle+\langle y,\pr_1(x)\rangle=0+\langle y,\pr_1(x)\rangle=\langle y,\pr_1(x)\rangle$$and thus $\sum_{n=1}^\infty|\langle y,x\rangle|=\sum_{n=1}^\infty|\langle y,\pr_1(x)\rangle|$, which implies the equality $\Gamma_1=X_1\cap \Gamma=\Gamma$.
\end{proof}

Now we are ready to complete the inductive proof of the theorem. By our assumption, the series $\sum_{n=1}^\infty x_n$ is $*$-potentially convergent. Then the series $\sum_{n=1}^\infty \pr_1(x_n)$ is $*$-potentially convergent in the Hilbert space $X_1$.

Since the space $X_1$ has dimension strictly smaller than $d$, the induction hypothesis and Claim~\ref{cl4} guarantee that the set $\Sigma_1$ of potential sums of the series $\sum_{n\in\IN}\pr_1(x_n)$ is a non-empty affine subspace of $X_1$ such that $\Sigma_1=\Sigma_1+\Gamma_1^\perp$, where $\Gamma_1^\perp=\{x\in X_1:\forall y\in\Gamma_1\;\langle y,x\rangle=0\}$ and $\Gamma_1=\Gamma=\{y\in X:\sum_{n=1}^\infty|\langle y,x_n\rangle|$ according to Claim~\ref{cl4}.

It remains to prove that the set $\Sigma$ of potential sums of the series $\sum_{n=1}^\infty x_n$ coincides with the non-empty affine subspace $X_0\oplus \Sigma_1$ of $X$.

 The inclusion $\Sigma\subset X_0\oplus \Sigma_1$ is trivial. To prove the reverse inclusion, we need to show that for any points $x\in X_0$ and $y\in\Sigma_1$ there exists a permutation $\sigma$ of $\mathbb N$ such that $\sum_{n=1}^\infty x_{\sigma(n)}=x+y$.

It will be more convenient instead of permutation of $\IN$, to construct a well-order $\preccurlyeq$ of order type $\w$ on the set $\mathbb N$ such that $\sum_{n\in\IN}^{\preccurlyeq}x_n=x+y$, which means that for any $\e>0$ there exists $k\in\IN$ such that for any $m\in\IN$ with $k\preceq m$ we get $\|\sum_{n\preccurlyeq m}x_n-(x+y)\|<\e$.

 Let $s_1\in X_1$ be the sum of the absolutely convergent series $\sum_{n\in\Omega}\pr_1(x_n)$. Since $y\in\Sigma_1$, there exists a permutation $\sigma:\IN\to\IN$ such that $y=\sum_{n=1}^\infty \pr_1(x_{\sigma(n)})$.
The permutation $\sigma$ determines a well-order $\preceq$ on the set $\Lambda:=\IN\setminus\Omega$ defined by $n\preceq m$ iff $\sigma(n)\le \sigma(m)$. For this well-order on $\Lambda$ we get $\sum_{n\in\Lambda}^\preceq x_n=y-s_1$. From now on, talking about minimal elements of subsets of $\Lambda$ we have in mind the well-order $\preceq$. The set $\Omega\subset\IN$ is endowed with the standard well-order $\le$.

By induction, we shall construct decreasing sequences of sets $(\Lambda_i)_{i\in\w}$ and $(\Omega_i)_{i\in\w}$ and an increasing sequence of finite sets $(F_i)_{n\in\w}$ such that the following conditions are satisfied for every $n\in\w$:
\begin{enumerate}
\item $\Lambda_{i+1}=\Lambda_i\setminus\{\min\Lambda_i\}$;
\item $\Omega_{i+1}\subset \Omega_i\setminus\{\min\Omega_i\}$;
\item $F_{i+1}=F_i\cup \{\min \Lambda_i\}\cup (\Omega_{i}\setminus\Omega_{i+1})$;
\item $\|x-\sum_{n\in F_{i+1}}\pr_0(x_n)\|<\frac1{2^i}$;
\item for any subset $E\subset F_{i+1}\setminus F_i$ we get $\|\sum_{n\in E}x_n\|\le C(\frac1{2^i}+\|x_{\min \Lambda_i}+x_{\min\Omega_i}\|)$.
    \end{enumerate}

We start the inductive construction, applying Claim~\ref{cl3} and choosing a finite set $F_0\subset\Omega$ such that $\|x-\sum_{n\in F_0}\pr_0(x_n)\|\le\|x-\sum_{n\in F_0}x_n\|<1$. Next, we put $\Omega_0=\Omega\setminus F_0$ and $\Lambda_0=\Lambda=\IN\setminus\Omega$.

Assume that for some $i\ge 0$ the sets $F_{i}$, $\Omega_{i}$ and $\Lambda_{i}$ have been constructed. Consider the element $a_i:=x-\pr_0(x_{\min\Lambda_i}+x_{\min\Omega_i}+\sum_{n\in F_{i}}x_n)\in X_0$ and observe that the inductive assumption (4) guarantees that
$$\|a_i\|\le \|x-\sum_{n\in F_i}\pr_0(x_n)\|+\|x_{\min\Lambda_i}+x_{\min\Omega_i}\|<\frac1{2^i}+
\|x_{\min\Lambda_i}+x_{\min\Omega_i}\|.$$
Applying Claim~\ref{cl3}, find a finite subset $E_{i}\subset \Omega_i\setminus (F_i\cup\{\min\Omega_i\})$ such that
$\|a_i-\sum_{n\in E_{i}}x_n\|<\frac1{2^{i+1}}$ and for every $E\subset E_i$
$$\big\|\sum_{n\in E}x_n\big\|< C\cdot\max\{\tfrac1{2^{i+1}},\|a_i\|\}\le C(\tfrac1{2^i}+\|x_{\min\Lambda_i}+x_{\min\Omega_i}\|).$$

Let $F_{i+1}=F_i\cup E_i\cup\{\min\Omega_i,\min\Lambda_i\}$, $\Omega_{i+1}=\Omega_i\setminus(E_i\cup\{\min\Omega_i\})$ and $\Lambda_{i+1}=\Lambda_i\setminus\{\min\Lambda_i\}$.
This completes the inductive step.

After completing the inductive construction of the sequence $(F_i)_{i\in\w}$, on the union $\bigcup_{i\in\w}F_i=\IN$ fix any well-order $\preccurlyeq$ such that $n\preccurlyeq m$ for any $i\in\w$ and numbers $n\in F_i$ and $m\notin F_i$. Observe that the well-order $\preccurlyeq$ induces the well-order $\preceq$ on the set $\Lambda$, which, combined with the absolute convergence of the series $\sum_{n\in\Omega}\pr_1(x_n)$, ensures that the series  $\sum_{n\in\IN}^{\preccurlyeq}\pr_1(x_n)$ converges to $y$. On the other hand, the conditions (4), (5) of the inductive construction and the convergence of  the sequence  $(x_n)_{n=1}^\infty$ to zero imply that the series $\sum_{n\in\IN}^\preccurlyeq \pr_0(x_n)$ converges to $x$. Consequently, the series $\sum_{n\in\IN}^\preccurlyeq x_n$ converges to $x+y$, which yields the desired inclusion $x+y\in \Sigma$.

Finally, observe that $\Sigma+\Gamma^\perp=(X_0+\Sigma_1)+(X_0+\Gamma_1^\perp)=
X_0+(\Sigma_1+\Gamma_1^\perp)=X_0+\Sigma_1=\Sigma$.

\section{Generalizing Lev\'y-Steinitz Theorem to Abelian topological groups}

The problem of extending the Lev\'y-Steinitz Theorem to infinite-dimensional Banach space was posed by Stefan Banach in the Scottish Book \cite[Problem 106]{SB} and to Abelian topological groups by Stanis\l aw Ulam , who noticed (without proof) in \cite{Ulam} that for any convergent series $\sum_{n=1}^\infty x_n$ in a compact metrizable Abelian topological group $X$ the set $\Sigma$ of potential sums of the series $\sum_{n=1}^\infty x_n$ coincides with the shift $H+\sum_{n=1}x_n$ of some subgroup $H\subset X$. This fact (whose proof was given in Banaszczyk \cite[10.2]{Ban}) can be considered as an extension of the first part of Lev\'y-Steinitz Theorem (describing the structure of the set $\Sigma$ of potential sums of a series). On the other hand, the second part of Lev\'y-Steinitz Theorem (characterizing series with non-empty set $\Sigma$ of partial sums) does not extend to locally compact Abelian topological groups as shown by the following simple example.

\begin{example}\label{ex} Let $\IT=\IR/\IZ$ be the circle group. The compact topological group $X=\IT\times\IT$ contains a sequence $(x_n)_{n\in\w}$ such that
\begin{enumerate}
\item $(x_n)_{n=1}^\infty$ converges to zero;
\item for every continuous homomorphism $f:X\to\IT$ the series $\sum_{n=1}^\infty f(x_n)$ is potentially convergent in $\IT$;
\item the series $(x_n)_{n=1}^\infty$ is not potentially convergent in $X$.
\end{enumerate}
\end{example}

\begin{proof} Let $q:\IR\to\IR/\IZ=\IT$ be the quotient homomorphism and $q^2:\IR^2\to\IT^2=X$ be the covering homomorphism defined by $q^2(x,y)=(q(x),q(y))$ for $(x,y)\in\IR^2$. The compactness of the topological group $X=\IT^2$ implies that the set of all non-zero continuous homomorphisms $h:X\to\IT$ is countable and hence can be enumerated as $\{h_n\}_{n\in\w}$. For every $n\in\w$ consider the continuous homomorphism $f_n=h_n\circ q^2:\IR^2\to\IT$. Since the space $\IR^2$ is simply-connected, the homomorphism $f_n$ can be lifted to a unique continuous homomorphism $\hat f_n:\IR^2\to\IR$ such that $q\circ\hat f_n=f_n$. Being continuous and additive, the homomorphism $\hat f_n$ is linear. Choose any non-zero linear continuous functional $f:\IR^2\to\IR$ such that $f\notin\{\hat f_n\}_{n\in\w}$. Then for every $n\in\w$ the sets $\{x\in \IR^2:f(x)>0,\;f_n(x)>0\}$ and $\{x\in \IR^2:f(x)>0,\;f_n(x)<0\}$ are not empty. This allows us to choose a sequence $(z_n)_{n=1}^\infty$ in the open half-plane $\{z\in \IR^2:f(z)>0\}$ such that
\begin{enumerate}
\item the sequence $(z_n)_{n=1}^\infty$ converges to zero;
\item the series $\sum_{n=1}^\infty f(z_n)$ diverges;
\item for every $k\in\w$ the series $\sum_{n=1}^\infty\max\{0,\hat f_k(z_n)\}$ and
    $\sum_{n=1}^\infty\min\{0,\hat f_k(z_m)\}$ are divergent. 
\end{enumerate}
In the group $\IT^2$ consider the sequence $(x_n)_{n\in\w}$ of points $x_n=q^2(z_n)$, $n\in\IN$. 
The conditions (1) and (2) imply that the series $\sum_{n=1}^\infty z_n$ is not potentially convergent in $\IR^2$ and hence the series $\sum_{n=1}^\infty x_n$ is not potentially convergent in $\IT^2$.
On the other hand, the condition (3) combined with the Riemann Rearrangement Theorem imply that for every $k\in\w$ the series $\sum_{n=1}^\infty \hat f_k(z_n)$ is potentially convergent in the real line and hence the series $\sum_{n=1}^\infty h_n(x_n)=
\sum_{n=1}^\infty q\circ \hat f_n\circ q^2(z_n)$ is potentially convergent in the circle $\IT$. Therefore, the sequence $(x_n)_{n\in\w}$ satisfies the conditions (1)--(3) of Example~\ref{ex}.
\end{proof}



\section{Acknowledgements}

The author express his sincere thanks to Vaja Tarieladze for asking the problem and many valuable suggestions and to Volodymyr Kadets for his  comments on the original Steinitz proof (which is differs from the proof presented in this paper).

\end{document}